\def\thetitle{A surprising fibration of \texorpdfstring{$S^3 \times S^3$}{S3 x S3} by great 3-spheres}
\def\theauthor{Haggai Nuchi}
\newcommand{\R}{\mathbb{R}}
\newcommand{\Z}{\mathbb{Z}}
\newcommand{\HH}{\mathbb{H}}
\newcommand{\F}{\mathcal{F}}
\newtheorem{thm}{Theorem}[section]
\newtheorem{prop}[thm]{Proposition}
\newtheorem{lem}[thm]{Lemma}
\newtheorem{cor}[thm]{Corollary}
\theoremstyle{definition}
\newtheorem{dfn}[thm]{Definition}
\theoremstyle{remark}
\newtheorem{rmk}[thm]{Remark}
\title[\thetitle]{\LARGE \thetitle}
\author{\theauthor}
\date{}
\begin{document}
  \begin{abstract}
    In this paper, we describe a new surprising example of a fibration of the Clifford torus $S^3\times S^3$ in the 7-sphere by great 3-spheres, which is fiberwise homogeneous but whose fibers are not parallel to one another. In particular it is not part of a Hopf fibration.
    
    A fibration is fiberwise homogeneous when for any two fibers there is an isometry of the total space taking fibers to fibers and taking the first given fiber to the second one.
    
    We also describe in detail the geometry of this surprising fibration and how it differs from the Hopf fibration.
  \end{abstract}
  \maketitle

  \section{Introduction}\label{Sec:Intro}
    \subsection{Background and definitions}
      The Hopf fibrations of $S^{2n+1}$ by great circles, $S^{4n+3}$ by great 3-spheres, and $S^{15}$ by great 7-spheres are the prototypical examples of fibrations of spheres by subspheres \cite{hopf1931abbildungen, hopf1935abbildungen}.

      They have many beautiful properties. They are unique among fibrations of spheres by great subspheres in having parallel fibers \cite{gromoll1988low, wilking2001index}. We show in another paper \cite{nuchi2014hopf} that they are also characterized among subsphere fibrations by having a symmetry group large enough to take any fiber to any other fiber.
      \begin{dfn}
        Let $\F$ be a fibration of a Riemannian manifold $(M,g)$. We say that $\F$ is {\em fiberwise homogeneous} if for any two fibers there is an isometry of $(M,g)$ taking fibers to fibers and taking the first given fiber to the second one. See Figure~\ref{Fig:ExampleFWH} for an example.
      \end{dfn}
      \begin{figure}[h]
        \centering
        \includegraphics[width=0.3\textwidth]{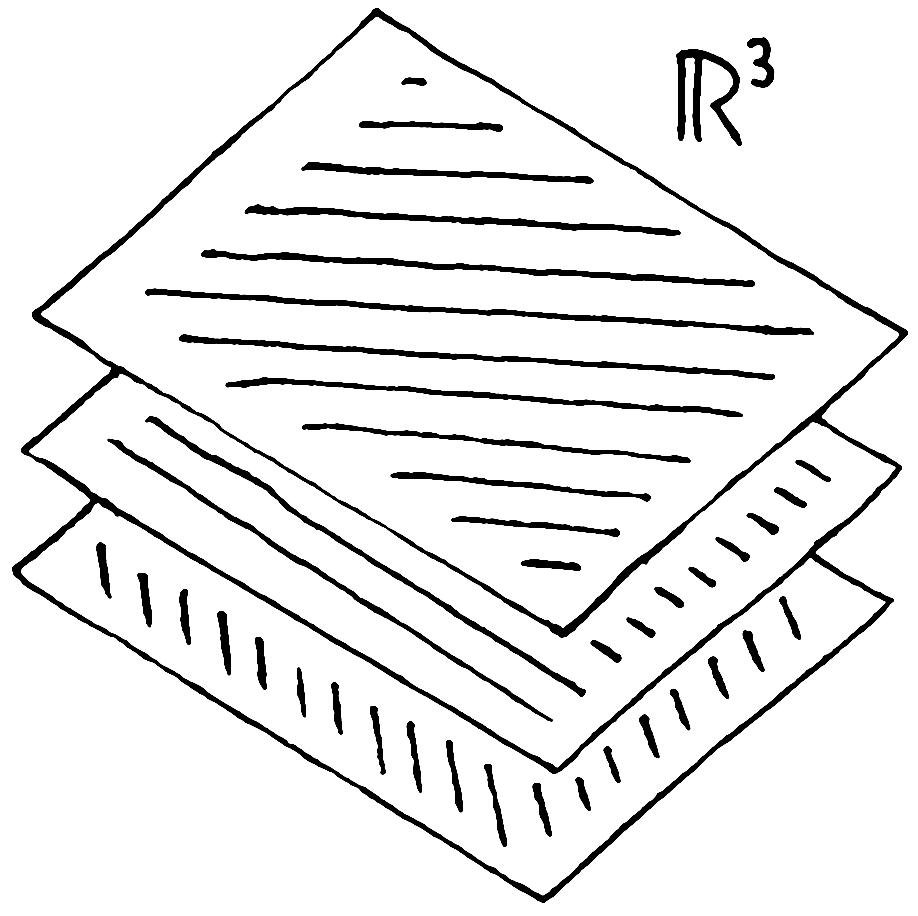}
        \caption{An example of a fiberwise homogeneous fibration of Euclidean 3-space by straight lines. Layer 3-space by parallel planes, and fill each plane by parallel lines, with the angle changing at a constant rate with respect to the height of the plane. The fibers are not parallel. See \cite{nuchi2014space}.}
        \label{Fig:ExampleFWH}
      \end{figure}
    
      The main theorem of \cite{nuchi2014hopf}, a companion paper to this one, is that the Hopf fibrations are characterized among fibrations of spheres by subspheres by being fiberwise homogeneous.
    
      In this paper we focus on the unit 7-sphere, embedded in $\R^8 = \R^4 \times \R^4$. Consider the ``Clifford Torus'' $S^3 \times S^3$ in the 7-sphere, where each factor is the sphere of radius $1/\sqrt{2}$ in its respective $\R^4$ factor. The main result here is that there are fibrations of this Clifford Torus by great 3-spheres which are fiberwise homogeneous but whose fibers are not parallel to one another. Hence they are not restrictions of a Hopf fibration of $S^7$.
    
      In Section~\ref{Sec:Nonstandard}, we will carefully construct these nonstandard fibrations, and prove that they are indeed distinct from the Hopf fibrations and that they are fiberwise homogeneous. In Section~\ref{Sec:Geometry}, we will elaborate on the geometry of these nonstandard fibrations, and explain how they differ from the Hopf fibrations.
      
    \subsection{Acknowledgments}
      This article is an extended version of a portion of my doctoral dissertation. I am very grateful to my advisor Herman Gluck, without whose encouragement, suggestions, and just overall support and cameraderie, this would never have been written.
      
      Thanks as well to the Math Department at the University of Pennsylvania for their support during my time there as a graduate student.
    
  \section{The Nonstandard fibrations of \texorpdfstring{$S^3 \times S^3$}{S3 x S3}}\label{Sec:Nonstandard}
    \subsection{John Petro's moduli space for fibrations of \texorpdfstring{$S^3 \times S^3$}{S3 x S3}}
      By ``great \mbox{3-sphere},'' we mean a subset of $S^3 \times S^3$ which, when included in $S^7$ as the Clifford Torus, is a great 3-sphere. Equivalently, a great 3-sphere in $S^3 \times S^3$ is the graph of an isometry from one 3-sphere factor to the other.
    
      We identify $S^3$ with the unit sphere in the space of quaternions. This allows us to identify $SO(4)$ with $S^3\times S^3 / \Z_2$, with the pair of unit quaternions $(p,q)$ acting on $x\in S^3$ via quaternionic multiplication:
      \[ (p,q)\cdot x = p\ x\ q^{-1}, \]
      and the $\Z_2$ action taking $(p,q)$ to $(-p,-q)$. Then a typical great 3-sphere in $S^3\times S^3$ can be written as the graph of an orientation-preserving isometry of $S^3$, $x\mapsto p  x  q^{-1}$:
      \[ \Sigma^3 = \{ (x, p\ x\ q^{-1}) : x\in S^3 \}, \]
      or that followed by an orientation-reversing isometry of the second factor.
    
      By the work of John Petro \cite{petro1987great}, inspired by earlier work of Herman Gluck and Frank Warner \cite{gluck1983great}, the fibrations of $S^3\times S^3$ by great 3-spheres are in one-to-one correspondence with four copies of the space of distance-decreasing maps of $S^3$ to $S^3$.
    
      One of those copies consists of fibrations of $S^3\times S^3$ by great 3-spheres $\Sigma^3$ of the form
      \[ \Sigma_p^3 = \{ (x, p\ x\ f(p)^{-1}) : x\in S^3 \}, \]
      where $p$ ranges across $S^3$ and indexes each fiber in the fibration, and $f$ is a fixed distance-decreasing map of $S^3$ to $S^3$ which determines the fibration.

      The other fibrations of $S^3 \times S^3$ by great 3-spheres have similar forms. The key fact which Petro uses to prove this correspondence is that two great 3-spheres
      \begin{align*}
        \Sigma_1 &= \{ (x, p_1\ x\ q_1^{-1} ) : x\in S^3 \} \\
        \Sigma_2 &= \{ (x, p_2\ x\ q_2^{-1} ) : x\in S^3 \}
      \end{align*}
      are disjoint precisely when $d(p_1,p_2) \neq d(q_1,q_2)$. In a fibration, any two fibers are disjoint, and so the inequality becomes either uniformly $>$ for any two fibers, or uniformly $<$ for any two fibers. In the former case we get fibers of the form
      \[ \Sigma_p^3 = \{ (x, p\ x\ f(p)^{-1}) : x\in S^3 \}, \]
      with $f$ a distance-decreasing function, and we get something similar in the case where the inequality is replaced by $<$.
    
    \subsection{The Hopf fibration}
      If we think of the unit 7-sphere as sitting in quaternionic 2-space, then the fibers of the Hopf fibration are the intersections of that 7-sphere with the various quaternionic lines through the origin. To be more precise, quaternionic 2-space has the structure of a ``vector space'' with quaternionic scalars acting on the right and matrices acting on the left. We write the quaternionic line spanned by $(u,v)\in \HH^2$ as $\{(u,v)\cdot x: x\in \HH \}$.
      
      If we restrict the Hopf fibration to $S^3\times S^3$, then we take only the quaternionic lines which intersect this set; that restricts our pairs $(u,v)$ to those where $\|u\|=\|v\|$. We normalize so that $\|u\|=\|v\|=1$. Further, we may choose $u=1$ by right-multiplying by an appropriate scalar. Finally, instead of multiplying the vector $(u,v)$ by any $x\in\HH$, we restrict ourselves to $x\in S^3$. Thus, the Hopf fibers in $S^3\times S^3$ are of the form
      \begin{align*}
        H_v &= \{(1, v) \cdot x : x\in S^3 \} \\
	    &= \{(x, v\ x) : x\in S^3 \},
      \end{align*}
      with $v$ a unit quaternion. To relate this to our discussion of John Petro's moduli space, the Hopf fibration corresponds to the fibration where we take our distance decreasing function to be $f(v) \equiv 1$. We may easily check that if we apply an isometry of $S^3 \times S^3$ to the Hopf fibrations, left- and right-multiplying on either $S^3$ factor, the distance-decreasing functions that arise in John Petro's moduli space are precisely the constant functions. In other words,
      \begin{prop}\label{Prop:HopfConstant}
        A fibration with fibers
        \[ \Sigma_p = \{ (x, p\ x\ f(p)^{-1}) : x\in S^3 \} \]
        is Hopf if and only if $f$ is a constant function.
      \end{prop}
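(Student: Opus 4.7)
The plan is to prove both implications by direct calculation with the isometry group of $S^3 \times S^3$.

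For the easy direction, suppose $f \equiv c$ for some fixed unit quaternion $c$. Then I would exhibit the isometry $\phi\colon (x,y) \mapsto (x, yc)$ of $S^3 \times S^3$, which is right multiplication by $c$ on the second factor, and check that it sends each fiber $\Sigma_p = \{(x, pxc^{-1}) : x \in S^3\}$ to the Hopf fiber $H_p = \{(x, px) : x \in S^3\}$. Hence $\phi$ carries the fibration $\{\Sigma_p\}$ onto the standard Hopf fibration $\{H_v\}$, so $\{\Sigma_p\}$ is Hopf up to isometry.

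For the other direction, suppose $\{\Sigma_p\}$ is Hopf, i.e.\ $\{\Sigma_p\} = \psi(\{H_v\})$ for some isometry $\psi$ of $S^3 \times S^3$, and read off the form of $f$. I would begin by recalling that such isometries are generated by the orientation-preserving factor-isometries $(x,y) \mapsto (a_1 x b_1^{-1}, a_2 y b_2^{-1})$, the factor swap $(x,y) \mapsto (y,x)$, and orientation-reversing factor maps such as quaternionic conjugation. Since both $\{\Sigma_p\}$ and $\{H_v\}$ consist of graphs of orientation-preserving isometries of $S^3$ (Petro's first copy), any orientation-reversing factor in $\psi$ would carry $H_v$ to the graph of an orientation-reversing isometry, placing it in a different Petro copy; such $\psi$ are therefore excluded.

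In the remaining cases, a change-of-variables computation (substituting $x' = a_1 x b_1^{-1}$, and analogously in the swap case) puts $\psi(H_v)$ in the form $\{(x, p x q^{-1}) : x \in S^3\}$ with $p$ depending on $v$ but $q$ independent of $v$. Reading off the $\Sigma_p$ indexing gives $f(p) = q$, a constant. The main obstacle, modest as it is, is enumerating the isometry classification cleanly and justifying the exclusion of the orientation-reversing factor cases; the underlying computations themselves are routine.
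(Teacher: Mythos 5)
Your approach is essentially the paper's---the paper itself dismisses this with ``we may easily check'' that applying isometries of $S^3 \times S^3$ to the Hopf fibration yields exactly the constant-$f$ fibrations---so you are doing the useful work of spelling out that check, and the easy direction and the change-of-variables for orientation-preserving and swap isometries are sound. However, your exclusion of orientation-reversing isometries has a gap. You assert that any orientation-reversing factor in $\psi$ carries $H_v$ to the graph of an orientation-\emph{reversing} isometry of $S^3$. This is true only when the total number of orientation-reversing factors is odd. Take $\psi(x,y) = (\bar{x},\bar{y})$, which reverses orientation on \emph{both} factors. Then $\psi(H_v) = \{(\bar{x},\, \bar{x}\,\bar{v}) : x\in S^3\} = \{(z,\, z\,v^{-1}) : z\in S^3\}$, the graph of the orientation-\emph{preserving} isometry $z\mapsto z\,v^{-1}$. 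Your stated reason for excluding $\psi$ simply does not apply to this case.

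The exclusion is still correct, but it needs a different justification for the even case. When you write $\psi(H_v)$ in the form $\{(z,\, p\,z\,q^{-1}) : z\in S^3\}$ with both factors reversing (or both reversing with a swap), the \emph{left} quaternion $p$ comes out independent of $v$ (in the example above, $p = 1$ and $q = v$ for every $v$), so the resulting family cannot be reindexed as $\{\Sigma_p\}_{p\in S^3}$ at all. In Petro's terms, it lands in the copy parametrized by the right-hand quaternion rather than the left-hand one, not in the copy of graphs of orientation-reversing isometries. You should treat this even-parity case (and its swap variants) explicitly, rather than folding it into the orientation-reversing-graph argument.
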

      \begin{rmk}
        Given a fibration with fibers
        \[ \Sigma_p = \{ (x, p\ x\ f(p)^{-1}) : x\in S^3 \}, \]
        it follows that two fibers
        \begin{align*}
          \Sigma_p &= \{ (x, p\ x\ f(p)^{-1}) : x\in S^3 \} \\
          \Sigma_q &= \{ (x, q\ x\ f(q)^{-1}) : x\in S^3 \}
        \end{align*}
        with $p\neq q$ are parallel if and only if $f(p) = f(q)$: if $f(p) = f(q)$, then both fibers belong to a Hopf fibration (where the constant function $f$ takes the value of $f(p)$). The reverse direction will follow later in Corollary~\ref{Cor:Convenient}.
      \end{rmk}
      
      We demonstrate that the Hopf fibration is indeed fiberwise homogeneous. For each $q\in S^3$, let $q$ act isometrically on $S^3\times S^3$ via $q\cdot (x,y) = (x,q\ y)$. Then it's clear that $q\cdot H_v = H_{qv}$, so that $S^3$ acts transitively on the Hopf fibers.
      
      We remark that this is not the largest group acting on the Hopf fibration; all we need is that it's large enough to act transitively taking fibers to fibers.
      
    \subsection{The new nonstandard examples}
      We will find a distance-decreasing map of $S^3$ to $S^3$ which is nonconstant and is pointwise homogeneous:
      \begin{dfn}
        A function $f:S^3 \to S^3$ is {\em pointwise homogeneous} if for all \mbox{$u, v \in S^3$} there are isometries $T_1, T_2 \in SO(4)$ such that $f\circ T_1 = T_2 \circ f$ and $T_1(u) = v$.
      \end{dfn}
      Finding such a map leads immediately to a fiberwise homogeneous fibration:
      \begin{prop}
        Let $f:S^3 \to S^3$ be distance-decreasing, and consider the fibration whose fibers are
        \[ \Sigma_p = \{ (x, p\ x\ f(p)^{-1}) : x\in S^3 \}. \]
        The fibration is fiberwise homogeneous if and only if $f$ is pointwise homogeneous.
      \end{prop}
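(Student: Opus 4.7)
The plan is to set up a dictionary between intertwining pairs $(T_1, T_2)$ in $SO(4) \times SO(4)$ for $f$ and product isometries of $S^3 \times S^3$ that preserve the fibration. Writing any $T_1 \in SO(4)$ as $x \mapsto c\, x\, a^{-1}$ and any $T_2 \in SO(4)$ as $y \mapsto d\, y\, b^{-1}$ for unit quaternions $a, b, c, d$, I claim that the associated product isometry $\Phi(x, y) = (a\, x\, b^{-1},\ c\, y\, d^{-1})$ of $S^3 \times S^3$ carries $\Sigma_p$ to $\Sigma_{T_1(p)}$ for every $p$, precisely when $f \circ T_1 = T_2 \circ f$. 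The verification is a short computation: applying $\Phi$ to $(x,\, p\, x\, f(p)^{-1}) \in \Sigma_p$ and substituting $y = a\, x\, b^{-1}$, the second coordinate becomes $(c\, p\, a^{-1})\, y\, (d\, f(p)\, b^{-1})^{-1}$, which has the form $q\, y\, f(q)^{-1}$ with $q = T_1(p) = c\, p\, a^{-1}$ exactly when $d\, f(p)\, b^{-1} = f(T_1(p))$, i.e., when the intertwining condition holds.

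With this dictionary in hand, the forward direction is immediate: pointwise homogeneity produces $T_1, T_2 \in SO(4)$ with $T_1(u) = v$ and $f \circ T_1 = T_2 \circ f$, and the associated $\Phi$ is a fiber-preserving isometry sending $\Sigma_u$ to $\Sigma_v$. For the reverse direction, fiberwise homogeneity produces some fiber-preserving isometry $\Phi$ carrying $\Sigma_u$ to $\Sigma_v$; the dictionary runs backwards to extract $T_1, T_2 \in SO(4)$ with the desired properties.

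The main obstacle in the reverse direction is justifying that such a $\Phi$ can always be taken in the product form above. The full isometry group of $S^3 \times S^3$ is generated by $O(4) \times O(4)$ together with the factor-swap $\tau : (x, y) \mapsto (y, x)$, so a priori $\Phi$ could involve orientation-reversals or a swap. Since every fiber $\Sigma_p$ is the graph of the \emph{orientation-preserving} isometry $x \mapsto p\, x\, f(p)^{-1}$, I would check case by case that an isometry with an odd number of orientation-reversing $O(4)$-components carries our fibers to graphs of orientation-reversing isometries and so cannot preserve the fibration; and that swap-type isometries, which induce the orientation-reversing map $p \mapsto p^{-1}$ on fiber indices, can be composed with a known product symmetry of $f$ to yield a product isometry with both factors in $SO(4)$. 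Once this reduction is secured, the dictionary closes the argument.
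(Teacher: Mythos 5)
Your forward direction reproduces the paper's argument exactly: the ``mix and match'' map $\Phi(x,y) = (a\,x\,b^{-1},\ c\,y\,d^{-1})$ assembled from the coefficients of $T_1$ and $T_2$ is the paper's isometry $(x,y)\mapsto(q_1\,x\,q_2^{-1},\ p_1\,y\,p_2^{-1})$, and the verification that it carries $\Sigma_p$ to $\Sigma_{T_1(p)}$ is the same computation. For the converse you correctly locate the subtlety the paper waves away in one sentence --- fiberwise homogeneity hands you an arbitrary isometry of $S^3\times S^3$, while pointwise homogeneity demands $T_1, T_2\in SO(4)$ --- but your case analysis does not close the resulting gap. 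The parity argument only rules out an \emph{odd} number of orientation-reversing $O(4)$-components; when \emph{both} components are orientation-reversing (with or without a swap) the two sign flips cancel, the image of $\Sigma_p$ is again the graph of an orientation-preserving map, and the obstruction disappears. The sentence about composing a swap with ``a known product symmetry of $f$'' is also unjustified: at that stage of the converse you do not yet know $f$ admits any nontrivial intertwining pair, so there is no known symmetry to borrow.

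There is a clean way to secure the reduction without case-checking. The set $G$ of isometries of $S^3\times S^3$ permuting the fibers is a closed, hence Lie, subgroup of $\mathrm{Isom}(S^3\times S^3)$, and fiberwise homogeneity says $G$ acts transitively on the space of fiber labels, which is the connected manifold $S^3$ (strict distance-decrease forces $p\mapsto\Sigma_p$ to be injective). A Lie group acting transitively on a connected manifold has its identity component already acting transitively, so $G_0$ is transitive; and $G_0$, being connected and containing the identity, lies inside $SO(4)\times SO(4)$. Your dictionary then applies directly to any element of $G_0$.
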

      \begin{proof}
        Let $f$ be pointwise homogeneous, and let $u,v\in S^3$. The isometries $T_1, T_2$ in $SO(4)$ which commute with $f$ (and for which $T_1(u)=v$) can be written
        \begin{align*}
          T_1(x) &= p_1\ x\ q_1^{-1}, \\
          T_2(x) &= p_2\ x\ q_2^{-1}.
        \end{align*}
        We mix and match $T_1$ and $T_2$: a straightforward computation shows that the isometry of $S^3 \times S^3$ which takes $(x,y)$ to $(q_1\ x\ q_2^{-1}, p_1\ y\ p_2^{-1})$ takes the fiber $\Sigma_u$ to $\Sigma_v$, and preserves fibers.
        
        Conversely, let the fibration be fiberwise homogeneous. The mixing and matching we performed in the one direction works just the same in reverse.
      \end{proof}
      
      The key insight of this paper is that the classical Hopf fibration $\pi_H: S^3 \to S^2$ can be tweaked to give a distance-decreasing map from $S^3$ to $S^3$ which is pointwise homogeneous, and which then, by the above proposition and the work of John Petro, yields a fiberwise homogeneous fibration of $S^3 \times S^3$ by great 3-spheres which is not part of a Hopf fibration of the 7-sphere.
      \\

      Let $S^2$ be the unit sphere in the space of purely imaginary quaternions. Then $\pi_H$ can be written explicitly as
      \[ \pi_H(p) = p\ i\ p^{-1}. \]
      The great circle fibers are the left cosets $pS^1$ of the circle subgroup
      \[ S^1 = \cos\theta + i\sin\theta. \]
      
      This map is distance-doubling in directions orthogonal to the fibers. For our purposes, we need a distance-decreasing map, so we shrink the image 2-sphere. Let $f:S^3\to S^2$ be defined by
      \begin{align*}
        f_{\alpha}(p) &= p(\cos\alpha + i\sin\alpha)p^{-1} \\
             &= \cos\alpha + \pi_H(p)\sin\alpha,
      \end{align*}
      where $\alpha$ is a fixed small angle. In particular, if $\alpha = \pi/6$, then the image 2-sphere has radius $1/2$. In that case $f_{\alpha}$ is distance-preserving in directions orthogonal to the Hopf fibers when we measure distances in the intrinsic metric on $S^2$, but is strictly distance-decreasing when distances in the 2-sphere are measured via shortcuts in $S^3$. Similarly if $\alpha$ is less than $\pi/6$ then $f_{\alpha}$ is likewise distance-decreasing.
      
      We remark that not only do we have $f_{\alpha}(p) = f_{\alpha}(p\ e^{it})$ for all $\alpha, p, t$, we also have $f_{\alpha}(q\ p) = q\ f_{\alpha}(p)\ q^{-1}$ for all $p,q$. In other words, $f_{\alpha}$ is pointwise homogeneous (left-multiplication by $q$ is conjugate to the isometry of conjugation by $q$).
      
      We take as our nonstandard fiberwise homogeneous fibration of $S^3\times S^3$ the collection of fibers
      \[ \Sigma_p^3 = \{ (x, p\ x\ f_{\alpha}(p)^{-1}) : x\in S^3 \}, \]
      for each $p \in S^3$, and for $0 \leq \alpha \leq \pi/6$.
      
      \begin{thm}
        The fibers $\Sigma_p^3$ form a fiberwise homogeneous fibration of $S^3\times S^3$, distinct from any Hopf fibration, as long as $0 < \alpha \leq \pi/6$.
      \end{thm}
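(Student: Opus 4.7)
The plan is to verify the three conclusions of the theorem—that $\{\Sigma_p^3\}$ is a fibration, that it is fiberwise homogeneous, and that it is distinct from any Hopf fibration—by assembling the tools already in place. Two of these are immediate. Pointwise homogeneity of $f_\alpha$ was established in the paragraph preceding the theorem via the identity $f_\alpha(qp) = q\,f_\alpha(p)\,q^{-1}$, so the preceding proposition supplies fiberwise homogeneity. For $\alpha > 0$ the function $f_\alpha$ is nonconstant (its image is the latitude 2-sphere of radius $\sin\alpha$ sitting at spherical angle $\alpha$ from $1 \in S^3$), so Proposition~\ref{Prop:HopfConstant} rules out Hopf.

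The substantive content is to show that $\{\Sigma_p^3\}$ really is a fibration, which by Petro's theorem reduces to showing that $f_\alpha$ is strictly distance-decreasing. I would factor $f_\alpha = \iota_\alpha \circ \pi_H$, where $\pi_H \colon S^3 \to S^2$ is the Hopf map into unit imaginary quaternions and $\iota_\alpha \colon S^2 \to S^3$, $v \mapsto \cos\alpha + v\sin\alpha$, is the embedding onto the latitude sphere. A short derivative calculation shows that $\pi_H$ is $2$-Lipschitz (zero along its fibers, doubling perpendicular to them) and that $\iota_\alpha$ is $\sin\alpha$-Lipschitz, so $f_\alpha$ is $2\sin\alpha$-Lipschitz. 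For $\alpha < \pi/6$ this constant is already strictly less than $1$, and strict distance decrease follows at once.

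The delicate case is $\alpha = \pi/6$, where $2\sin\alpha = 1$ exactly. Here the key observation is that although $\iota_\alpha$ is an isometry onto the latitude sphere equipped with its own intrinsic metric, the $S^3$-distance between two points of that 2-sphere, taken along a geodesic cutting through the interior of $S^3$, is strictly less than their distance along the 2-sphere itself. A brief Taylor expansion of $\arccos(\cos^2\alpha + \sin^2\alpha\cos\beta)$ against $\sin\alpha\cdot\beta$ verifies this strict inequality for all $\alpha \in (0, \pi/2)$ and $\beta \in (0, \pi]$; hence $\iota_\alpha$ is in fact strictly distance-decreasing by a factor smaller than $\sin\alpha$. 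Combined with the $2$-Lipschitz bound on $\pi_H$, this forces $d(f_\alpha(p_1), f_\alpha(p_2)) < d(p_1, p_2)$ whenever $\pi_H(p_1) \neq \pi_H(p_2)$; when $\pi_H(p_1) = \pi_H(p_2)$ but $p_1 \neq p_2$ the images coincide and the strict inequality is trivial. The main obstacle is thus this boundary case, whose geometric essence is simply that chords of a small 2-sphere inside $S^3$ are strictly shorter than arcs along that 2-sphere, providing exactly the margin needed to beat $2\sin(\pi/6) = 1$.
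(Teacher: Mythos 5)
Your proposal is correct and follows the same overall route as the paper: use Proposition~\ref{Prop:HopfConstant} to rule out Hopf, invoke pointwise homogeneity of $f_\alpha$ for fiberwise homogeneity, and reduce the fibration claim to $f_\alpha$ being strictly distance-decreasing via Petro's correspondence. The genuine difference is in how much work goes into the distance-decreasing claim. The paper treats this as already established by an earlier informal remark (the image 2-sphere has radius $\sin\alpha\le 1/2$, so $f_\alpha$ preserves intrinsic distances orthogonal to Hopf fibers when $\alpha=\pi/6$ but loses strictly once one takes $S^3$-shortcuts), and the proof of the theorem simply cites that observation. You instead make this precise with the factorization $f_\alpha=\iota_\alpha\circ\pi_H$, the Lipschitz constants $2$ and $\sin\alpha$, and a careful treatment of the boundary case $\alpha=\pi/6$, including the observation that $\arccos(\cos^2\alpha+\sin^2\alpha\cos\beta)<\beta\sin\alpha$ for $\beta>0$ because the latitude 2-sphere is not totally geodesic in $S^3$, and the degenerate case $\pi_H(p_1)=\pi_H(p_2)$. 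This is a welcome tightening: your Lipschitz argument turns the paper's heuristic into a complete proof that works for all $0<\alpha\le\pi/6$, not just the strict inequality case. Conversely, for fiberwise homogeneity you simply invoke the preceding proposition (pointwise homogeneous $\Leftrightarrow$ fiberwise homogeneous), whereas the paper elects to re-derive it by a direct computation with the explicit $S^3$-action $q\cdot(x,y)=(xq^{-1},qyq^{-1})$; either is fine, and yours is more economical given that the proposition is already on the record. One small caution: a bare ``Taylor expansion'' of $\arccos(\cos^2\alpha+\sin^2\alpha\cos\beta)$ against $\sin\alpha\cdot\beta$ agrees to first order at $\beta=0$, so you need the second-order term (or, more cleanly, the derivative computation showing $h'(\beta)<\sin\alpha$ for $\beta>0$ when $\alpha\in(0,\pi/2)$) to actually conclude strict inequality on $(0,\pi]$; worth making that explicit if you write it out.
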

      \begin{proof}
        We have already seen that these great 3-spheres form a fibration, from Petro's moduli space and the observation that $f_{\alpha}$ is distance-decreasing. This fibration is not a Hopf fibration because $f_{\alpha}$ is nonconstant for $\alpha\in (0, \pi/6]$. It remains to show that the fibration is fiberwise homogeneous.
        
        Let $q \in S^3$. We let $S^3$ act isometrically on $S^3\times S^3$ via
        \[ q \cdot (x, y) = ( x\ q^{-1}, q\ y\ q^{-1} ). \]
        Then we claim that $q \cdot \Sigma_p^3 = \Sigma_{qp}^3$. We compute:
        \begin{align*}
          q \cdot \Sigma_p^3 &= q \cdot \{ (x, p\ x\ f_{\alpha}(p)^{-1}) : x\in S^3 \} \\
          &= \{ (x\ q^{-1} , q\ p\ x\ f_{\alpha}(p)^{-1}\ q^{-1} ) : x\in S^3 \} \\
          &= \{ (y , q\ p\ y\ q\ f_{\alpha}(p)^{-1}\ q^{-1} ) : y\in S^3 \}\quad\mbox{ (with $y=x\ q^{-1}$)} \\
          &= \{ (y , (q p)\ y\ f_{\alpha}(q p)^{-1} ) : y\in S^3 \}\quad\mbox{(homogeneity of $f_{\alpha}$)} \\
          &= \Sigma_{qp}^3.
        \end{align*}
        In the third equality we merely substitute $y = x\ q^{-1}$, and in the fourth equality we use the pointwise homogeneity of $f_{\alpha}$ which we remarked upon earlier. This concludes the proof that our fibration is fiberwise homogeneous.
      \end{proof}

  \section{Geometry of the nonstandard fibration}\label{Sec:Geometry}
    In each of the classical Hopf fibrations, the fibers are parallel to one another. This allows us to get a feel for the geometry of the fibrations by putting a metric on the base space of the fibrations which makes their projections into Riemannian submersions. Likewise for the restriction of the Hopf fibration of $S^7$ by 3-spheres to the Clifford torus: the fibers are parallel to one another, and the base is diffeomorphic to $S^3$, and the submersion metric is a round metric.
    
    But in our new nonstandard example, the fibers are not parallel to one another, so we cannot put a submersion metric on the base. We have to get a feel for the geometry by other means. Refer again to Figure~\ref{Fig:ExampleFWH} for an example to keep in mind, of a fiberwise homogeneous fibration with nonparallel fibers.
%
        
    Here is what we do: we fix our attention on one fiber $\Sigma$, and we aim for a description of how that fiber sits in relation to nearby fibers. To be more specific, for each nearby fiber $\Sigma'$, we determine the subset of $\Sigma$ which lies closest to $\Sigma'$. If our fibration were Riemannian, there would be no distinguished subset of $\Sigma$, but because the fibration does not have parallel fibers, as our attention flits over various nearby $\Sigma'$, we will notice various ``hot'' subsets of $\Sigma$ closest to $\Sigma'$ and various ``cold'' subsets of $\Sigma$ which are furthest from $\Sigma'$. See Figure~\ref{Fig:HotAndCold}.
    
    \begin{figure}[h]
      \centering
      \includegraphics[width=0.5\textwidth]{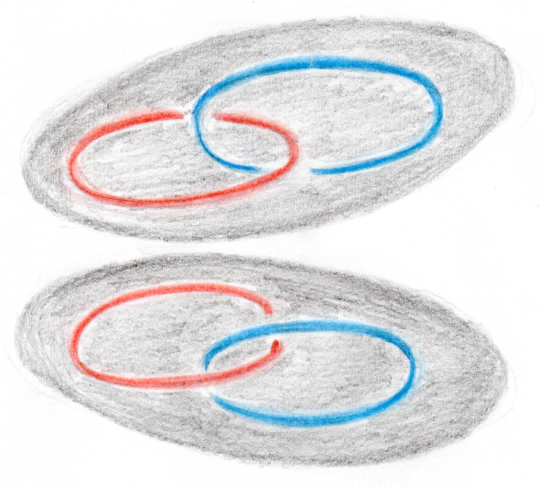}
      \caption{If two disjoint great 3-spheres in $S^3 \times S^3$ are not parallel to one another, then each will have a unique great circle of closest approach to the other, shown in red and called the {\em hot circle}, and a unique great circle of furthest separation from the other, shown in blue and called the {\em cold circle}. The hot circles are parallel to one another; so are the cold circles, but they are further apart.}
      \label{Fig:HotAndCold}
    \end{figure}
            
    Now fix a value of $\alpha$ in $(0,\pi/6]$, and consider our nonstandard fibration with fibers
    \[ \Sigma_p = \{ (x, p\ x\ f_{\alpha}(p)^{-1}) : x\in S^3 \}, \]
    for each $p\in S^3$, and with $f_{\alpha}(p)=p\ e^{i\alpha}\ p^{-1}$. We fix our attention on the fiber
    \[ \Sigma_1 = \{ (x, x\ e^{-i\alpha}) : x\in S^3 \}, \]
    and determine how the nearby fibers are positioned relative to it. By ``nearby fibers,'' we mean those fibers $\Sigma_p$ for which $p$ is on the boundary of an $\varepsilon$-neighborhood of $1\in S^3$.
    
    First we observe that if we move away from $\Sigma_1$ in the $i$-direction, we find that the fibers $\Sigma_{e^{i\varepsilon}}$ and $\Sigma_{e^{-i\varepsilon}}$ are parallel to $\Sigma_1$. That's because
    \[ f_{\alpha}(1) = f_{\alpha}(e^{i\varepsilon}) = f_{\alpha}(e^{-i\varepsilon}) = e^{i\alpha};\]
    the function $f_{\alpha}$ is constant along the Hopf fibers (left cosets $p\ e^{i\theta}$).
    
    Thus moving from fiber to fiber in the $i$-direction, our nonstandard fibration looks just like a Hopf fibration. The new interesting geometry lies in the orthogonal directions, as we let $p$ vary along the circle $p(\theta) = e^{j_{\theta}\varepsilon} = e^{(j\cos\theta+k\sin\theta)\varepsilon}$. Here we let $j_{\theta}$ denote a point on the great circle through $j$ and $k$. We will see momentarily that for those values of $p$, the fiber $\Sigma_1$ has a great circle's worth of closest points to $\Sigma_{p(\theta)}$ and a great circle's worth of furthest points from $\Sigma_{p(\theta)}$. Refer again to Figure~\ref{Fig:HotAndCold}.
        
    We answer the question ``what is this nonstandard fibration's geometry like?'' by describing how these great circles vary with $\theta$. Because our fibration is fiberwise homogeneous, the geometry near $\Sigma_1$ is the same as the geometry near any other fiber.
    
    Our task is now to determine the ``hot'' and ``cold'' sets in $\Sigma_1$ relative to the fibers $\Sigma_{p(\theta)}$; i.e.~the subsets of $\Sigma_1$ closest to and furthest from those nearby fibers.
    
    To compute the hot and cold sets in $\Sigma_1$ relative to $\Sigma_{p(\theta)}$, we take advantage of the fact that they will be preserved by isometries of the ambient space, and apply an isometry of $S^3\times S^3$ (depending on $\theta$) which takes $\Sigma_1$ to the diagonal, and takes $\Sigma_{p(\theta)}$ to a conveniently placed great 3-sphere. The following lemma demonstrates what it means to be a ``conveniently placed great 3-sphere.''
    
    \begin{lem}
      Let $\bigtriangleup$ and $\Sigma$ be the great 3-spheres
      \begin{align*}
        \bigtriangleup &= \{(x, x) : x\in S^3 \}, \\
        \Sigma &= \{(y, e^{i\theta}\ y\ e^{-i\phi}) : y\in S^3 \},
      \end{align*}
      with $0 < \phi < \theta < \pi$. Then the great circle
      \[ \{(\cos t+i\sin t, \cos t+i\sin t) : t\in [0,2\pi]\} \]
      is the set of points in $\bigtriangleup$ closest to $\Sigma$ (the ``hot'' set), and the great circle
      \[ \{(j\cos t+k\sin t, j\cos t+k\sin t) : t\in [0,2\pi]\} \]
      is the set of points in $\bigtriangleup$ furthest from $\Sigma$ (the ``cold'' set).
    \end{lem}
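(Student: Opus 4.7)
The plan is to parametrize a generic point of $\bigtriangleup$ as $(x,x)$ and reduce the hot/cold problem to an extremization of a single scalar function of $x$. Since hot and cold sets are determined by angular distance in $S^7$, and angular distance is monotone in the $\R^8$ inner product, it suffices to compute
\[ M(x) \;=\; \max_{y\in S^3}\bigl[\langle x,y\rangle + \langle x,\,e^{i\theta}y e^{-i\phi}\rangle\bigr], \]
where the inner products are taken in $\R^4 \cong \HH$ and equal $\operatorname{Re}(x\bar y)$, etc. The hot set in $\bigtriangleup$ will be the locus where $M(x)$ is maximal, and the cold set where $M(x)$ is minimal.

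The first step is to rewrite the quantity being maximized over $y$ as $\operatorname{Re}\bigl((x+e^{-i\theta}xe^{i\phi})\bar y\bigr)$, so that
\[ M(x) \;=\; |x+e^{-i\theta}xe^{i\phi}|, \]
attained at $y = (x+e^{-i\theta}xe^{i\phi})/|x+e^{-i\theta}xe^{i\phi}|$. The whole problem thus reduces to finding the unit quaternions $x$ maximizing and minimizing $|x+e^{-i\theta}xe^{i\phi}|^2$.

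The second step is to decompose $x=\alpha+\beta$ with $\alpha\in\R\oplus\R i$ and $\beta\in\R j\oplus\R k$, and use that elements of $\R\oplus\R i$ commute with $e^{it}$ while elements of $\R j\oplus\R k$ anticommute with $i$ (so $e^{-i\theta}\beta=\beta e^{i\theta}$). This gives
\[ x+e^{-i\theta}xe^{i\phi} \;=\; \alpha\bigl(1+e^{i(\phi-\theta)}\bigr) \;+\; \beta\bigl(1+e^{i(\theta+\phi)}\bigr), \]
and the two summands lie in the orthogonal subspaces $\R\oplus\R i$ and $\R j\oplus\R k$ respectively. Using $|1+e^{i\psi}|^2=4\cos^2(\psi/2)$, I get
\[ |x+e^{-i\theta}xe^{i\phi}|^2 \;=\; 4|\alpha|^2\cos^2\!\bigl(\tfrac{\theta-\phi}{2}\bigr) \;+\; 4|\beta|^2\cos^2\!\bigl(\tfrac{\theta+\phi}{2}\bigr), \]
subject to $|\alpha|^2+|\beta|^2=1$.

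The third step is to compare the two coefficients. The identity $\cos^2 u-\cos^2 v=(\cos u-\cos v)(\cos u+\cos v)$ together with product-to-sum formulas yields, for $0<\phi<\theta<\pi$, that $\cos^2((\theta-\phi)/2)>\cos^2((\theta+\phi)/2)$ strictly; the factors $2\sin(\theta/2)\sin(\phi/2)$ and $2\cos(\theta/2)\cos(\phi/2)$ are both positive in this range. Consequently the maximum of $M(x)^2$ on $\{|\alpha|^2+|\beta|^2=1\}$ is attained exactly when $\beta=0$, and the minimum exactly when $\alpha=0$. Translating back, the hot set is precisely $\{(x,x):x\in(\R\oplus\R i)\cap S^3\}$, the great circle $\{(\cos t+i\sin t,\cos t+i\sin t)\}$; the cold set is $\{(x,x):x\in(\R j\oplus\R k)\cap S^3\}$, the great circle $\{(j\cos t+k\sin t,j\cos t+k\sin t)\}$. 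The only subtle step is the orthogonal splitting into commuting/anticommuting parts that diagonalizes the map $x\mapsto x+e^{-i\theta}xe^{i\phi}$; once this is in place the rest is a direct optimization.
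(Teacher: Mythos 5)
Your proof is correct, but it takes a genuinely different route from the paper's. The paper works with the intrinsic product Riemannian metric on $S^3\times S^3$: it computes $d\big((z,z),\Sigma\big)=\tfrac{1}{\sqrt2}\,d(z,e^{-i\theta}z e^{i\phi})$ via a midpoint argument, then rewrites this as $d(z^{-1}e^{i\theta}z,\,e^{i\phi})$ and reasons geometrically about where the conjugate $z^{-1}e^{i\theta}z$ can travel on the sphere of radius $\theta$ about $1$. You instead use the $\R^8$ inner product (extrinsic $S^7$ angular distance), reduce the inner problem over $y$ to the linear functional $\langle x+e^{-i\theta}xe^{i\phi},\,y\rangle$, and then diagonalize the linear map $x\mapsto e^{-i\theta}xe^{i\phi}$ by the orthogonal decomposition $\HH=(\R\oplus\R i)\oplus(\R j\oplus\R k)$ into its commuting and $i$-anticommuting pieces. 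That yields the exact closed form $4|\alpha|^2\cos^2\frac{\theta-\phi}{2}+4|\beta|^2\cos^2\frac{\theta+\phi}{2}$, which makes the extremization over the constraint $|\alpha|^2+|\beta|^2=1$ entirely transparent and even quantitative. Note that the paper's product-metric distance and your $S^7$ distance are different metrics on the Clifford torus; they give the same hot and cold sets here because both are monotone in $d(z,e^{-i\theta}ze^{i\phi})$ (your maximizing $y$ is proportional to $z+e^{-i\theta}ze^{i\phi}$, which is exactly the paper's midpoint $\overline z$), but it would be worth a sentence to flag that equivalence. Overall your algebraic diagonalization buys an explicit formula; the paper's version buys geometric intuition for the conjugation picture that is reused later in the section.
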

    \begin{proof}
      The ``hot'' set $H$ and ``cold'' set $C$ are the subsets of $\bigtriangleup$ given by
      \begin{align*} 
        H &= \{ (x,x)\in \bigtriangleup : d((x,x),\Sigma) = \inf_{(z,z)\in \bigtriangleup}d((z,z),\Sigma) \} \\
        C &= \{ (x,x)\in \bigtriangleup : d((x,x),\Sigma) = \sup_{(z,z)\in \bigtriangleup}d((z,z),\Sigma) \}
      \end{align*}
      We compute, for $(z,z)\in\bigtriangleup$ (see the discussion afterwards for explanation):
      \begin{align*}
        d((z,z),\Sigma) &= \inf_{y\in S^3} \sqrt{d(z,y)^2 + d(z,e^{i\theta}\ y\ e^{-i\phi})^2} \\
        &= \inf_{y\in S^3} \sqrt{d(z,y)^2 + d(e^{-i\theta}\ z\ e^{i\phi},y)^2} \\
        &= \sqrt{d(z,\overline{z})^2 + d(e^{-i\theta}\ z\ e^{i\phi},\overline{z})^2} \\
        &= \frac{1}{\sqrt{2}} d(z, e^{-i\theta}\ z\ e^{i\phi}).
      \end{align*}
      This requires explanation. We use $d$ to denote both distance in $S^3\times S^3$ and in $S^3$, relying on context to distinguish between the two. The first equality is by the definition of the product metric. To minimize this quantity, we have to pick $y$ to minimize the sum of squares of distances between a fixed point $z$ and two points that vary with $y$, see Figure~\ref{Fig:Infimums}, left picture. The second equality follows because $L_{e^{-i\theta}}\circ R_{e^{i\phi}}$ is an isometry of $S^3$. Now we have the easier task of minimizing the sum of squares of distances between two fixed points and a single point $y$ that varies, see Figure~\ref{Fig:Infimums}, right picture. In the third equality, we minimize the given quantity by choosing $y=\overline{z}$, the midpoint of $z$ and $e^{-i\theta}\ z\ e^{i\phi}$.
      
      \begin{figure}[h]
        \centering
        \includegraphics[width=0.9\textwidth]{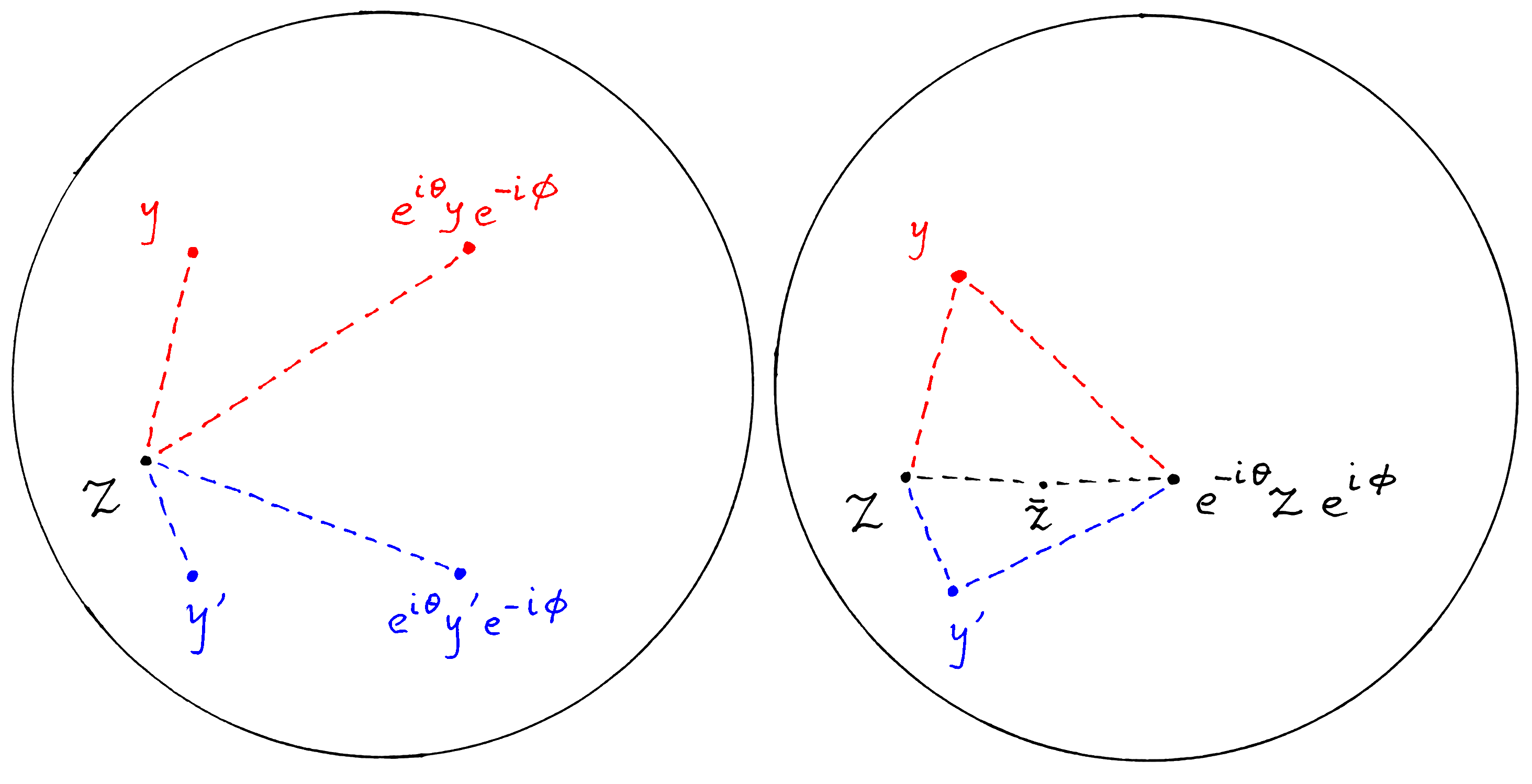}
        \caption{In the left picture, it's difficult to see which choice of $y$ will minimize the sum of the squares of the dashed distances. In the right picture, it's easier to see that the correct choice is the midpoint of $z$ and $e^{-i\theta}ze^{i\phi}$.}
        \label{Fig:Infimums}
      \end{figure}
      
      To find $H$, we find the infimum of $d(z, e^{-i\theta}\ z\ e^{i\phi})$, and see that the values of $z$ which minimize it are precisely those on the great circle joining $1$ and $i$. Similarly, to find $C$, we find the supremum of $d(z, e^{-i\theta}\ z\ e^{i\phi})$, and see that the values of $z$ which maximize it are precisely those on the great circle joining $j$ and $k$.
      
      This takes a small amount of computation to see. We observe that
      \[ d(z, e^{-i\theta}\ z\ e^{i\phi}) = d(z^{-1}\ e^{i\theta}\ z, e^{i\phi}). \]
      As $z$ varies, the point $z^{-1}\ e^{i\theta}\ z$ remains on a sphere centered at $1$. But $e^{i\theta}$ and $e^{i\phi}$ lie on half a great circle with endpoints $\pm 1$, so the values of $z$ which minimize the expression above are precisely those which leave $e^{i\theta}$ where it is instead of moving it further away. Those values of $z$ are exactly the great circle through $1$ and $i$. A similar argument shows that for $z$ on the great circle through $j$ and $k$, the expression $z^{-1}\ e^{i\theta}\ z$ is equal to $e^{-i\theta}$ and hence is maximally far from $e^{i\phi}$. See Figure~\ref{Fig:Minimum}.
      \begin{figure}[h]
        \centering
        \includegraphics[width=0.4\textwidth]{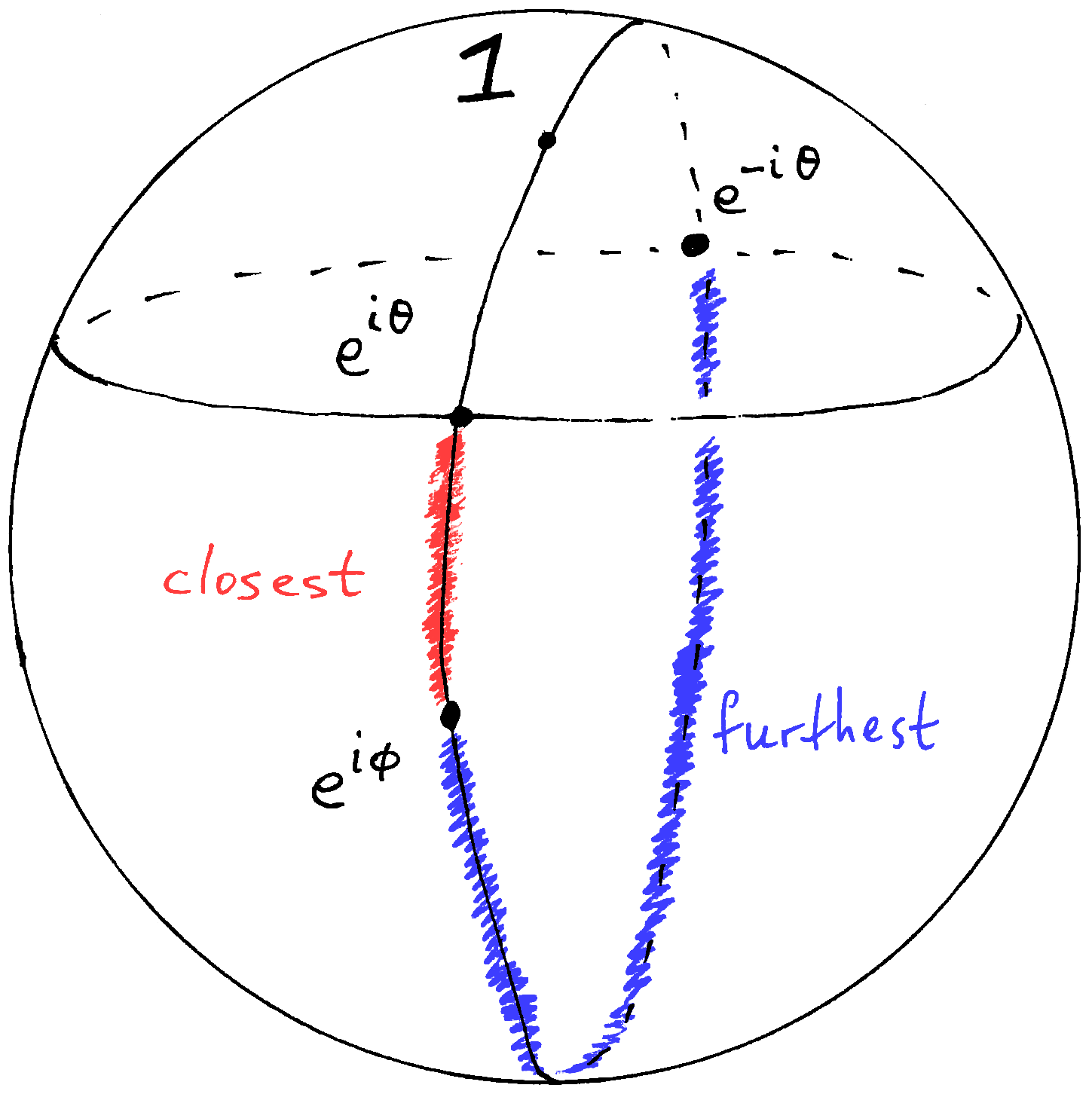}
        \caption{As $z$ varies, $z^{-1}\ e^{i\theta}\ z$ remains on a sphere centered at $1$. For $z$ on the great circle containing $1$ and $i$, $z^{-1}\ e^{i\theta}\ z$ remains at $e^{i\theta}$, close to $e^{i\phi}$. For $z$ on the great circle containing $j$ and $k$, $z^{-1}\ e^{i\theta}\ z$ moves to $e^{-i\theta}$, maximally far from $e^{i\phi}$.}
        \label{Fig:Minimum}
      \end{figure}      
    \end{proof}
    \begin{cor}\label{Cor:Convenient}
      The above lemma also holds if we replace $(i,j,k)$, wherever we see them, with any positive orthonormal basis for the space of purely imaginary quaternions. The proof works exactly the same way. Thus a ``conveniently placed 3-sphere'' relative to the diagonal is one of the form
      \[ \Sigma = \{(y, e^{q\theta}\ y\ e^{-q\phi}) : y\in S^3 \} \]
      with $q$ any purely imaginary unit quaternion.
    \end{cor}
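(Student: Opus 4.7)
My plan is to reduce the general case to the $q=i$ case already handled in the lemma by conjugating the whole picture by a well-chosen unit quaternion. Given any positive orthonormal basis $(q_1,q_2,q_3)$ of the purely imaginary quaternions, use the standard fact that conjugation by unit quaternions realizes the full group $SO(3)$ acting on $\operatorname{Im}\HH$ to pick $r\in S^3$ with $rir^{-1}=q_1$, $rjr^{-1}=q_2$, $rkr^{-1}=q_3$. Then define the ambient isometry $\Phi\colon S^3\times S^3\to S^3\times S^3$ by $\Phi(a,b)=(rar^{-1},rbr^{-1})$, which is a product of two isometries of $S^3$ and in particular an isometry of the product.

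I would then verify two easy properties of $\Phi$. First, $\Phi(\bigtriangleup)=\bigtriangleup$, since $\Phi(x,x)=(rxr^{-1},rxr^{-1})\in\bigtriangleup$. Second, $\Phi$ carries the sphere $\Sigma=\{(y,e^{i\theta}ye^{-i\phi}):y\in S^3\}$ from the lemma to the sphere $\{(y',e^{q_1\theta}y'e^{-q_1\phi}):y'\in S^3\}$ of the corollary; this uses the identity $re^{i\theta}r^{-1}=e^{q_1\theta}$, which follows by expanding the exponential as $\cos\theta+i\sin\theta$ and applying $rir^{-1}=q_1$ term by term. Because $\Phi$ is an ambient isometry that preserves $\bigtriangleup$, it necessarily sends the hot set of $\bigtriangleup$ relative to $\Sigma$ to the hot set of $\bigtriangleup$ relative to $\Phi(\Sigma)$, and similarly for the cold set.

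Tracking the circles from the lemma under $\Phi$ finishes the argument: the hot circle $\{(\cos t+i\sin t,\cos t+i\sin t)\}$ maps to $\{(\cos t+q_1\sin t,\cos t+q_1\sin t)\}$, the diagonally embedded great circle through $1$ and $q_1$, and the cold circle through $j,k$ maps to the diagonally embedded great circle through $q_2,q_3$. These match the claim of the corollary. There is essentially no obstacle here; the whole proof is the naturality of the lemma's computation under the conjugation action of $S^3$ on itself. The only mildly subtle point worth flagging is that orientation-preservation of the basis $(q_1,q_2,q_3)$ is exactly what allows a single $r$ to send $(i,j,k)$ to it simultaneously, since quaternionic conjugation yields only orientation-preserving rotations of $\operatorname{Im}\HH$.
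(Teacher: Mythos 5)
Your proof is correct, and it takes a genuinely different and more structural route than the paper. The paper's ``proof'' of this corollary is the one-line assertion that the lemma's argument goes through unchanged with $q_1,q_2,q_3$ in place of $i,j,k$ --- essentially an invitation to re-run the computation. You instead invoke symmetry: conjugation by a unit quaternion $r$ with $r i r^{-1}=q_1$, $r j r^{-1}=q_2$, $r k r^{-1}=q_3$ gives an ambient isometry $\Phi$ of $S^3\times S^3$ that fixes $\bigtriangleup$, sends the lemma's $\Sigma$ to the corollary's $\Sigma$, and therefore carries hot and cold sets to hot and cold sets. This is cleaner because it explains \emph{why} the computation is insensitive to the choice of basis, rather than asking the reader to check that it is. Your observation about orientation is also the right one to flag: the surjection $S^3\to SO(3)$ by conjugation hits only orientation-preserving rotations of $\operatorname{Im}\HH$, so the existence of a single $r$ moving $(i,j,k)$ to $(q_1,q_2,q_3)$ is exactly equivalent to the positivity hypothesis. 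For the final ``conveniently placed 3-sphere'' statement one only needs a choice of $q_1=q$ and any completion to a positive orthonormal frame, which always exists, so the corollary's conclusion holds for every purely imaginary unit $q$, as claimed. No gaps.
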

    
    Recall that we have defined
    \begin{align*}
      \Sigma_1 &= \{ (x, xe^{-i\alpha} ) : x\in S^3 \}, \\
      \Sigma_{p(\theta)} &= \{ (y, e^{j_{\theta}\varepsilon}\ y\ f_{\alpha}(e^{j_{\theta}\varepsilon})^{-1} : y\in S^3 \} \\
       &= \{ (y, e^{j_{\theta}\varepsilon}\ y\ e^{j_{\theta}\varepsilon}\ e^{-i\alpha}\ e^{-j_{\theta}\varepsilon}) : y \in S^3 \},
    \end{align*}
    with $j_{\theta} = j\cos\theta + k\sin\theta$. We would like to move $\Sigma_1$ to the diagonal $\bigtriangleup$, and also move $\Sigma_{p(\theta)}$ to a conveniently placed great 3-sphere. To that end, we let \mbox{$T\in SO(4)\times SO(4)$} be the isometry of $S^3\times S^3$ defined by
    \[ T(x,y) = (x, y\ e^{i\alpha}). \]
    Then we compute
    \begin{align*}
      \Sigma_1^{'} &= T\cdot \Sigma_1 = \bigtriangleup \\
      \Sigma_{p(\theta)}^{'} &= T\cdot \Sigma_{p(\theta)} = \{ (y, e^{j_{\theta}\varepsilon}\ y\ e^{j_{\theta}\varepsilon}\ e^{-i\alpha}\ e^{-j_{\theta}\varepsilon}\ e^{i\alpha}) : y\in S^3 \}.
    \end{align*}
    We let $q' = e^{j_{\theta}\varepsilon}\ e^{-i\alpha}\ e^{-j_{\theta}\varepsilon}\ e^{i\alpha}$. We find (omitting the tedious computation) that
    \begin{align*}
      q' &= (\cos^2\varepsilon + \sin^2\varepsilon \cos 2\alpha) + (\sin^2\varepsilon \sin 2\alpha)i \\
	& \quad + (2\cos\varepsilon \sin\varepsilon \sin\alpha)(j\cos(\theta+\alpha-\frac{\pi}{2})+k\sin(\theta+\alpha-\frac{\pi}{2})).
    \end{align*}
    If we let $\varepsilon \to 0$, and take a first-order approximation, then we conclude that
    \[ q' \sim 1 + 2\varepsilon\sin\alpha (j\cos(\theta+\alpha-\frac{\pi}{2})+k\sin(\theta+\alpha-\frac{\pi}{2})). \]
    Therefore $\Sigma_{p(\theta)}^{'}$ is not yet conveniently positioned relative to $\bigtriangleup$, because
    \[ j_{\theta}\neq j\cos(\theta+\alpha-\frac{\pi}{2})+k\sin(\theta+\alpha-\frac{\pi}{2}). \]
    
    Therefore we let $T'\in SO(4)\times SO(4)$ act on $S^3\times S^3$ via
    \[ T'(x,y) = (x e^{i\frac{1}{2}(\frac{\pi}{2}-\alpha)}, y e^{i\frac{1}{2}(\frac{\pi}{2}-\alpha)} ). \]
    This choice is exactly what we need to move $\Sigma_{p(\theta)}^{'}$ to a convenient position. We find that
    \begin{align*}
      \Sigma_1^{''} &= T'\cdot \Sigma_1^{'} = \bigtriangleup \\
      \Sigma_{p(\theta)}^{''} &= T'\cdot \Sigma_{p(\theta)}^{'} = \{ (y, e^{j_{\theta}\varepsilon}\ y\ e^{-i\frac{1}{2}(\frac{\pi}{2}-\alpha)}\ q'\ e^{i\frac{1}{2}(\frac{\pi}{2}-\alpha)}) : y\in S^3 \}.
    \end{align*}
    Computing $q'' = e^{-i\frac{1}{2}(\frac{\pi}{2}-\alpha)}\ q'\ e^{i\frac{1}{2}(\frac{\pi}{2}-\alpha)}$, we find (omitting, again, the tedious computation) that
    \begin{align*}
      q'' &= (\cos^2\varepsilon + \sin^2\varepsilon \cos 2\alpha) + (\sin^2\varepsilon \sin 2\alpha)i \\
	& \quad + (2\cos\varepsilon \sin\varepsilon \sin\alpha)(j\cos(\theta)+k\sin(\theta)).
    \end{align*}
    Letting $\varepsilon \to 0$, and taking a first-order approximation, we see that
    \[ q'' \sim 1 + 2\varepsilon\sin\alpha j_{\theta}.\]
    Therefore $\Sigma_{p(\theta)}^{''}$ is conveniently placed relative to $\bigtriangleup$ (see Corollary~\ref{Cor:Convenient}), at least in the limit as $\varepsilon \to 0$.
    
    It follows that the ``hot'' circle in $\bigtriangleup$ relative to $\Sigma_{p(\theta)}^{''}$ is the great circle passing through $1$ and $j_{\theta}$, and the ``cold'' circle is the orthogonal great circle passing through $i$ and $j_{\theta+\pi/2}$, at least as $\varepsilon \to 0$. All that remains is to move $\bigtriangleup$ back to $\Sigma_1$ and see where that takes these hot and cold circles.
    
    \begin{thm}
      The set of points in $\Sigma_1$ which lies closest to $\Sigma_{p(\theta)}$ is the great circle which, when projected to the first factor of $S^3\times S^3$, passes through $e^{i(\frac{\alpha}{2}-\frac{\pi}{4})}$ and $(j\cos\theta + k\sin\theta)e^{i(\frac{\alpha}{2}-\frac{\pi}{4})}$.
      
      The set of points in $\Sigma_1$ lying furthest from $\Sigma_{p(\theta)}$ is the great circle which, when projected to the first factor, passes through $i\ e^{i(\frac{\alpha}{2}-\frac{\pi}{4})}$ and $(-j\sin\theta + k\cos\theta)e^{i(\frac{\alpha}{2}-\frac{\pi}{4})}$.
    \end{thm}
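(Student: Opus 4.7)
The plan is to transport the question back from the diagonal to $\Sigma_1$, using the isometries already set up in the preceding paragraphs. Isometries of $S^3\times S^3$ preserve distances between subsets, so they carry the hot and cold sets in one fiber to the hot and cold sets in its image. Thus it suffices to identify the hot and cold circles in $\bigtriangleup$ relative to $\Sigma_{p(\theta)}''$ and then push them forward through the inverse isometry $(T'\circ T)^{-1}$ back to $\Sigma_1$.

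First, I would apply Corollary~\ref{Cor:Convenient}. The paragraphs above establish that, to first order in $\varepsilon$, $\Sigma_{p(\theta)}''$ has the convenient form with purely imaginary unit quaternion $q = j_\theta = j\cos\theta + k\sin\theta$. Hence the hot circle in $\bigtriangleup$ is $\{(\cos t + j_\theta\sin t,\ \cos t + j_\theta\sin t) : t\in[0,2\pi]\}$, passing through $(1,1)$ and $(j_\theta,j_\theta)$, while the cold circle is the great circle in $\bigtriangleup$ through $(i,i)$ and $(j_{\theta+\pi/2},j_{\theta+\pi/2})$, where $j_{\theta+\pi/2} = -j\sin\theta + k\cos\theta$.

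Next, I would compute and invert the composite isometry. Setting $\beta = \frac{1}{2}(\frac{\pi}{2}-\alpha) = \frac{\pi}{4} - \frac{\alpha}{2}$, a direct substitution gives $(T'\circ T)(x,y) = (x e^{i\beta},\, y e^{i(\alpha+\beta)}) = (x e^{i(\pi/4 - \alpha/2)},\, y e^{i(\pi/4 + \alpha/2)})$, so its inverse sends $(x,y)$ to $(x e^{i(\alpha/2 - \pi/4)},\, y e^{-i(\pi/4 + \alpha/2)})$. Applying this inverse to the hot circle in $\bigtriangleup$ and projecting onto the first $S^3$ factor yields $\{(\cos t + j_\theta\sin t)\,e^{i(\alpha/2 - \pi/4)} : t\in[0,2\pi]\}$, a great circle passing through $e^{i(\alpha/2 - \pi/4)}$ (at $t=0$) and $(j\cos\theta+k\sin\theta)\,e^{i(\alpha/2 - \pi/4)}$ (at $t=\pi/2$), as claimed. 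The cold circle is handled identically, yielding the great circle through $i\,e^{i(\alpha/2 - \pi/4)}$ and $(-j\sin\theta + k\cos\theta)\,e^{i(\alpha/2 - \pi/4)}$.

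The main (and really only) delicate point is that $q''$ has an $i$-component of order $\varepsilon^2$, so $\Sigma_{p(\theta)}''$ is only asymptotically of the convenient form from Corollary~\ref{Cor:Convenient}; the identified hot and cold circles should therefore be understood as the limits as $\varepsilon\to 0$, consistent with the setup's focus on ``nearby'' fibers. Past that caveat, the argument reduces to careful quaternion bookkeeping in transporting two explicit great circles through an explicit isometry.
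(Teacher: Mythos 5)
Your proposal matches the paper's own proof exactly: apply Corollary~\ref{Cor:Convenient} to identify the hot and cold circles in $\bigtriangleup$ relative to $\Sigma_{p(\theta)}''$, then transport them back to $\Sigma_1$ via $(T'\circ T)^{-1}$. The explicit computation of $(T'\circ T)^{-1}(x,y) = (xe^{i(\alpha/2-\pi/4)},\ ye^{-i(\alpha/2+\pi/4)})$ and the remark that the identification is valid in the $\varepsilon\to 0$ limit are both correct and faithfully fill in the details the paper elides.
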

    \begin{proof}
      We simply apply Corollary~\ref{Cor:Convenient} to $\bigtriangleup$ and $\Sigma_{p(\theta)}^{''}$, and follow what happens to the hot and cold circles in $\bigtriangleup$ as we undo the transformations $T'$ and $T$.
    \end{proof}
    As a final remark, we observe that the hot circles in $\Sigma_1$ always pass through the antipodal points $\pm e^{i(\frac{\alpha}{2}-\frac{\pi}{4})}$ independently of $\theta$. Likewise the cold circles always pass through $\pm i\ e^{i(\frac{\alpha}{2}-\frac{\pi}{4})}$ independently of $\theta$. As we vary $\theta$, we watch the hot and cold circles spin around their fixed points to trace out 2-spheres. The hot and cold circles spin round one another like a pair of linked eggbeater blades. See Figure~\ref{Fig:eggbeater}. In the center we have $\Sigma_1$, with the circle of fibers $\Sigma_{e^{\varepsilon(j\cos\theta+k\sin\theta)}}$ arrayed around it. Inside $\Sigma_1$, the linked red and blue great circles are the hot and cold circles.
    \begin{figure}[h]
      \centering
      \includegraphics[width=0.5\textwidth]{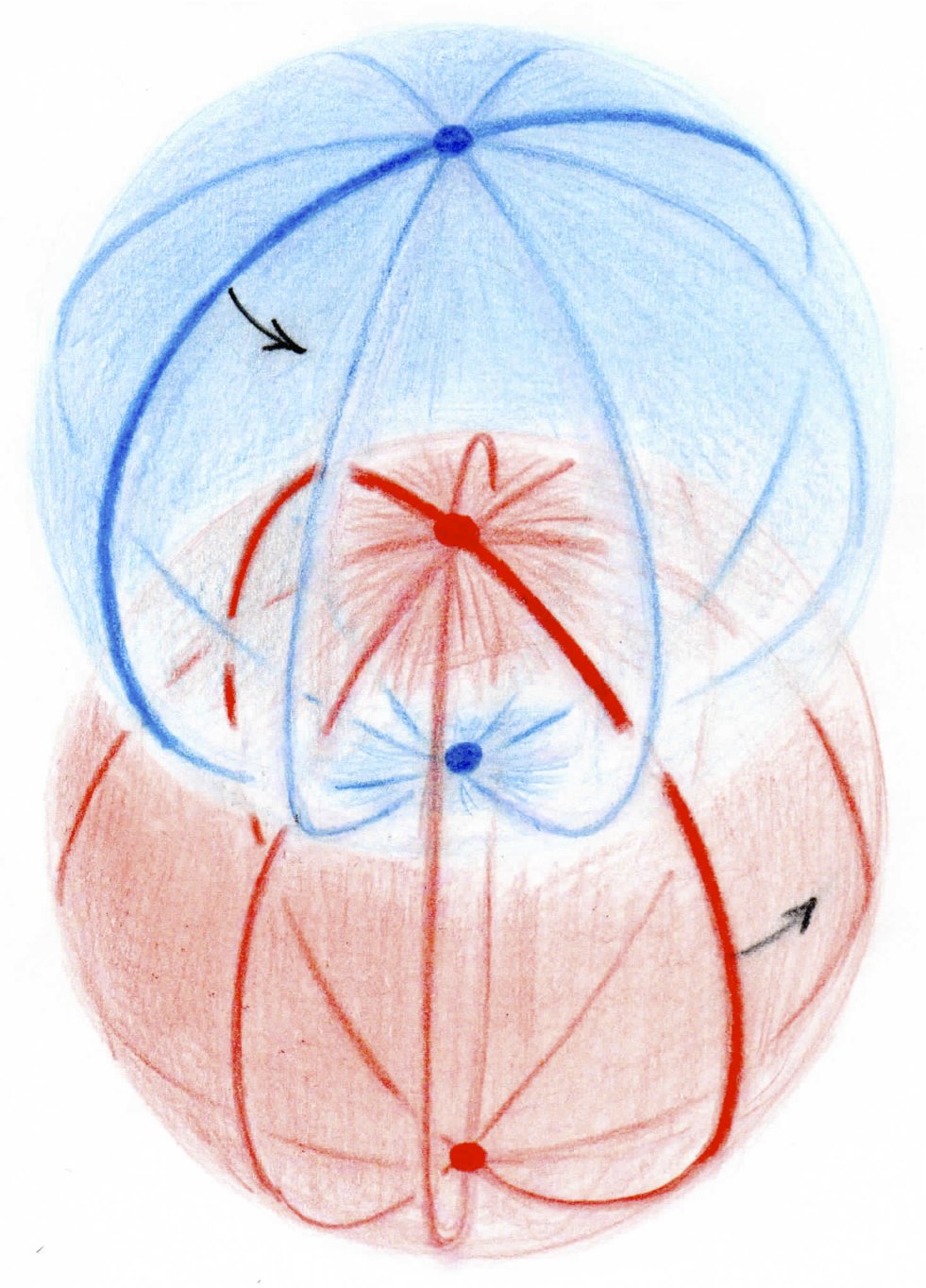}
      \caption{In our unusual fiberwise homogeneous fibration of \mbox{$S^3\times S^3$} by great 3-spheres, we progress along a circle's worth of fibers about a given one but not parallel to it, and record on the given fiber the corresponding progression of hot and cold circles. These remind us of the rotating blades of an eggbeater, with all the hot circles pivoting around a common pair of antipodal points, and likewise for the cold circles. As in a real eggbeater, collision of the blades is avoided by their coordinated progression.}
      \label{Fig:eggbeater}
    \end{figure}

\nocite{wong1961isoclinic, wolf1963elliptic, wolf1963geodesic, escobales1975riemannian, gromoll1985one, ranjan1985riemannian, gluck1986geometry, gluck1987fibrations}

\bibliography{thesis}{}
\bibliographystyle{amsplain}
\end{document}